\numberwithin{equation}{section}
\newcommand{\beq}{\begin{equation}}
\newcommand{\eeq}{\end{equation}}
\newcommand\be{\begin{equation}}
\newcommand\ee{\end{equation}}
\newcommand\bea{\begin{eqnarray}}
\newcommand\eea{\end{eqnarray}}
\newcommand\bel{\begin{equation}}
\newcommand\eel{\end{equation}}
\newcommand\beal{\begin{eqnarray}}
\newcommand\eeal{\end{eqnarray}}
\newcommand\bi{\begin{itemize}}
\newcommand\ei{\end{itemize}}
\newcommand\ben{\begin{enumerate}}
\newcommand\een{\end{enumerate}}
\newcommand{\twocase}[5]{#1 \begin{cases} #2 & \text{{\rm #3}}\\ #4 &\text{{\rm #5}} \end{cases}   }
\newcommand{\ncr}[2]{{#1 \choose #2}}
\global\long\def\ncr#1#2{{#1  \choose #2}}
\newcommand{\E}{\mathbb{E}}
\newtheorem{thm}{Theorem}[section]
\newtheorem{lem}[thm]{Lemma}
\newtheorem{rek}[thm]{Remark}
\newtheorem{theorem}{Theorem}[section]
\newtheorem{definition}[theorem]{Definition}
\numberwithin{equation}{section}
\newtheorem*{remark*}{Remark}
\numberwithin{remark}{section}
\numberwithin{subsubcase}{subcase}
\numberwithin{subsubsection}{subsection}
\begin{document}

\title{When Generalized Sumsets are Difference Dominated}

\author{Virginia Hogan}
\email{ginny6@stanford.edu}
\address{Department of Mathematics, Stanford University, Stanford, CA 94305}

\author{Steven J. Miller}
\email{sjm1@williams.edu, Steven.Miller.MC.96@aya.yale.edu} \address{Department of Mathematics and Statistics, Williams College, Williamstown, MA 01267}

\subjclass[2010]{11P99 (primary), 11K99 (secondary).}

\keywords{Sum dominated sets, more sum than difference sets, strong concentration, phase transitions.}

\date{\today}

\thanks{The first named author was partially supported by NSF Grant DMS0850577, and second named author was partially supported by NSF Grant DMS0970067. We thank the participants of the 2012 SMALL REU program, especially Kevin Vissuet, as well as Kevin O'Bryant and Dmitrii Zhelezov for helpful discussions.}

\begin{abstract}
We study the relationship between the number of minus signs in a generalized sumset, $A+\cdots+A-\cdots-A$, and its cardinality; without loss of generality we may assume there are at least as many positive signs as negative signs. As addition is commutative and subtraction is not, we expect that for most $A$ a combination with more minus signs has more elements than one with fewer; however, recently Iyer, Lazarev, Miller and Zhang \cite{ILMZ} proved that a positive percentage of the time the combination with fewer minus signs can have more elements. Their analysis involves choosing sets $A$ uniformly at random from $\{0,\dots,N\}$; this is equivalent to independently choosing each element of $\{0,\dots,N\}$ to be in $A$ with probability $1/2$. We investigate what happens when instead each element is chosen with probability $p(N)$, with $\lim_{N\to\infty} p(N) =0$. We prove that the set with more minus signs is larger with probability $1$ as $N\to\infty$ if $p(N)=cN^{-\delta}$ for $\delta\ge\frac{h-1}{h}$, where $h$ is the number of total summands in $A+\cdots+A-\cdots-A$, and explicitly quantify their relative sizes. The results generalize earlier work of Hegarty and Miller \cite{HM}, and we see a phase transition in the behavior of the cardinalities when $\delta = \frac{h-1}{h}$.
\end{abstract}

\maketitle

\tableofcontents

\section{Introduction}\label{sec:intro}

\subsection{Previous Results}


Let $A$ be a subset of the integers. We define the \textbf{sumset} $A+A$ and the \textbf{difference set} $A-A$ by
\be
A+A\ =\ \{a_{1}+a_{2}:a_{i}\in A\},\ \ \ A-A\ = \ \{a_{1}-a_{2}:a_{i}\in A\}.
\ee Many important problems in number theory are related to these sets and their generalizations. For example, if $P$ denotes the set of primes and $K$ the set of $k$\textsuperscript{th} powers of positive integers, then the Goldbach conjecture is equivalent to $P+P$ contains all even numbers, the twin prime conjecture is $P-P$ contains 2 infinitely often, Fermat's Last Theorem is $(K+K) \cap K$ is empty if $k \ge 3$, and Waring's problem is that for each $k$ there is an $s$ such that $K + \cdots + K$ ($s$ times) contains all positive integers.

Note the last problem involves more than one binary operation; the main goal of this paper is to explore what happens to generalized sumsets in different models. Before stating our results, we review some previous work. As addition is commutative and subtraction is not, a typical pair of integers generates two differences but only one sum. It is therefore reasonable to expect a generic finite set $A$ has a larger
difference set than sumset. If this is the case then we say $A$ is \textbf{difference dominated}, while if the two sets have the same size we say the set is \textbf{balanced}, and if the sumset is larger then $A$ is \textbf{sum dominated} (also called a \textbf{more sums than differences (MSTD) set}). It was conjectured that if $A$ is chosen uniformly at random from $\{0,\dots,N\}$ then as $N\to\infty$ almost all sets are difference dominated. In 2007, however, Martin and O'Bryant \cite{MO} disproved this conjecture by showing a positive percentage of sets are sum dominated. The percentage is small, around $4.5 \cdot 10^{-4}$ \cite{Zh}.

While these results imply that sum dominated sets are not too rare, this is a consequence of how the sets are chosen. An equivalent formulation is that each element of $I_N := \{0,\dots,N\}$ is chosen to be in $A$ with probability 1/2. With high probability
a randomly chosen subset $A$ has approximately $N/2$ elements (with errors of size $\sqrt{N}$). Thus the density of a generic subset to the underlying set $I_{N}$ is quite high, typically about $1/2$. Because it is so high, when we look at the sumset (resp., difference set) of a typical $A$ there are many ways of expressing elements as a sum (resp., difference) of two elements of $A$. Almost all possible sums and differences are realized; the expected number of missing differences is 6, while the expected number of missing sums is 10. Thus, a typical set needs just a small nudge to become sum dominated. This can be accomplished by appropriately choosing the fringe elements of $A$ (the elements near 0 and $N$), as almost surely changes at the fringes do not affect whether or not most possible sums and differences are realized.

This observation suggests that instead of taking each element of $I_N$ with probability 1/2 (or any fixed, non-zero probability), we should instead explore what happens when all of these elements are chosen independently with probability $p(N)$, where $p$ is some function tending to zero; this is a binomial model with parameter $p(N)$. Such an analysis was done by Hegarty and Miller \cite{HM} in 2009. They showed that if $p(N) = c N^{-\delta}$ for some $\delta \in (0,1)$, then almost surely $A$ is difference dominated. The analysis breaks into three cases based on the probability for choosing elements in $A$. The authors study \textbf{fast decay} ($\delta>1/2$), \textbf{critical decay} ($\delta=1/2$), and \textbf{slow decay} ($\delta<1/2$). There is a phase transition at $\delta = 1/2$, leading to the name critical decay.

Before stating their results we first introduce some definitions, notation, conventions, and standard facts that we use in our results as well.

We start with notation for sizes. By $f(x)=O(g(x))$ we mean that there exist constants $x_{0}$ and $C$ such that for all $x\ge x_{0}$, $|f(x)|\le Cg(x)$. We write $f(x)=\Theta(g(x))$ if both $f(x)=O(g(x))$ and $g(x)=O(f(x))$. If $\lim_{x\to\infty}f(x)/g(x)=0$ then we write $f(x)=o(g(x))$, which is equivalent to $f(x) \ll g(x)$.

As the fundamental objects of study are sizes of sets, we need a way to denote asymptotic behavior. Let $X$ be a real-valued random variable depending on some positive integer parameter $N$, and let $f(N)$ be some real-valued function. By ``$X\sim f(N)$'' we mean that, for any $\epsilon_{1},\epsilon_{2}>0$, there exists $N_{\epsilon_{1},\epsilon_{2}}>0$ such that, for all $N>N_{\epsilon_{1},\epsilon_{2}}$,
\be P(X\notin\left[(1-\epsilon_{1})f(N),(1+\epsilon_{1})(f(N)\right])\ <\ \epsilon_{2}. \ee

We now state the main past result, which we will generalize.

\begin{thm}[Hegarty-Miller \cite{HM}]\label{thm:mainuniform}
Let $p : \mathbb{N} \rightarrow (0,1)$ be any function such that
\be\label{eq:old13} N^{-1}\ =\ o(p(N)) \ \ \ \ {\rm and}\ \ \ \
p(N)\ =\ o(1).\ee For each $N \in \mathbb{N}$ let $A$ be a random
subset of $I_{N}$ chosen according to a binomial distribution with
parameter $p(N)$. Then, as $N \rightarrow \infty$, the probability
that $A$ is difference dominated tends to one.
\par More precisely, let $\mathscr{S},
\mathscr{D}$ denote respectively the random variables $|A+A|$ and $|A-A|$.
Then the following three situations arise:
\\
\\
(i) $p(N) = o(N^{-1/2})$ : Then \be\label{eq:old14} \mathscr{S}\ \sim\
{(N\cdot p(N))^{2} \over 2} \;\;\; {\hbox{and}} \;\;\; \mathscr{D}
\sim 2\mathscr{S}\ \sim\ (N \cdot p(N))^{2}. \ee (ii) $p(N) = c \cdot
N^{-1/2}$ for some $c \in (0,\infty)$ : Define the function $g :
(0,\infty) \rightarrow (0,2)$ by \be g(x)\ :=\ 2\left(\frac{e^{-x} -
(1-x)}{x}\right). \ee Then \be\label{eq:old16} \mathscr{S}\ \sim\
g\left({c^{2} \over 2}\right) N \;\;\; {\hbox{and}} \;\;\;
\mathscr{D}\ \sim\ g(c^{2}) N. \ee (iii) $N^{-1/2} = o(p(N))$ : Let
$\mathscr{S}^{c} := (2N+1) - \mathscr{S}$, $\mathscr{D}^{c} :=
(2N+1) - \mathscr{D}$. Then \be \mathscr{S}^{c}\ \sim\ 2 \cdot
\mathscr{D}^{c}\ \sim\ {4 \over p(N)^{2}}. \ee
\end{thm}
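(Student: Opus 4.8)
The plan is to analyze the three regimes separately, in each case computing the expected sizes of $\mathscr{S}=|A+A|$ and $\mathscr{D}=|A-A|$ and then showing strong concentration about these expectations. The fundamental tool throughout is a change of perspective: rather than counting realized sums (resp.\ differences), it is cleaner in some regimes to count \emph{missing} ones, and in others to count realized ones directly, depending on whether the density is low or high. For each potential sum value $s\in\{0,\dots,2N\}$, let me introduce the indicator that $s$ is \emph{not} representable as $a_1+a_2$ with $a_i\in A$; then $\mathscr{S}^c=\sum_s \mathbf{1}\{s\text{ missing}\}$, and similarly for differences with $d\in\{-N,\dots,N\}$. The first step is to compute $\E[\mathscr{S}]$ and $\E[\mathscr{D}]$ exactly (or to leading order) by summing, over each target value, the probability that it is attained. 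This reduces to understanding, for a fixed $s$, the number of representations $s=i+j$ with $0\le i,j\le N$ (which is essentially $\min(s,2N-s)+1$, a triangular profile) and the probability that no such pair lies in $A\times A$, accounting for the diagonal $i=j$ where only one element need be chosen.

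The second step is to carry out the asymptotic evaluation of these sums in each regime. In the fast-decay case (i), where $p(N)=o(N^{-1/2})$, the expected number of pairs is small, collisions are negligible, and one expects $\E[\mathscr{S}]\sim \binom{|A|}{2}\sim (Np)^2/2$ while differences give twice as many ordered pairs, yielding $\E[\mathscr{D}]\sim (Np)^2$; here I would approximate the indicator that a value is hit by the probability a single representing pair is present, controlling the overcount from multiple representations by a second-moment or inclusion–exclusion bound. In the critical case (ii), $p=cN^{-1/2}$ makes the expected number of representations of a typical value $\Theta(1)$, so the probability a value is missed is genuinely a nontrivial exponential-type quantity; converting the sum over $s$ into a Riemann integral over the triangular representation profile is what produces the function $g(x)=2(e^{-x}-(1-x))/x$, evaluated at $c^2/2$ for sums and $c^2$ for differences (the factor of two coming again from ordered versus unordered pairs). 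In the slow-decay case (iii), almost every value is represented, so the efficient object is $\mathscr{S}^c$ and $\mathscr{D}^c$; a value $s$ is missing essentially only when its $\approx k$ representing pairs all fail, and summing the resulting probabilities, which decay geometrically in the number of representations, yields the $\Theta(p^{-2})$ answers with the stated constants $4$ and $2$.

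The third step is to upgrade each expectation computation to the ``$\sim$'' concentration statement defined in the excerpt, i.e.\ to show $\mathscr{S}$ and $\mathscr{D}$ lie within $(1\pm\epsilon_1)$ of their means with probability $\to 1$. The natural route is to bound the variance and apply Chebyshev, or to use the fact that these are functions of independent Bernoulli choices and invoke a bounded-differences or Azuma-type concentration inequality; since adding or removing a single element of $I_N$ changes the number of missing sums/differences by at most $O(N)$ in the worst case, one must check that the typical fluctuation is genuinely lower order than the mean in each regime, which is where the regime boundaries matter. I expect the main obstacle to be the variance control in the critical case (ii): there the mean is $\Theta(N)$ and the contributions of nearby target values $s,s'$ are correlated through shared elements of $A$, so the covariance terms must be estimated carefully to show they do not swamp the diagonal variance, and one must verify that the fluctuations are $o(N)$. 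The edge effects near $s=0$ and $s=2N$ (and the sign asymmetry distinguishing sums from differences) require separate but routine handling and do not affect the leading-order constants.
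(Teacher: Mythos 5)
This theorem is imported from \cite{HM}; the present paper states it without proof, but the machinery it generalizes (the $X_k$ variables, Lemma \ref{lem:strong_concentration}, and the discussion in Section \ref{sec:slowdecay}) makes the intended argument clear, and your proposal matches it in two of the three regimes. Your case (i) plan (pair-counting plus a second-moment bound on collisions and Chebyshev) is exactly the fast-decay argument, and your case (iii) plan is essentially verbatim what the paper records of \cite{HM} in Section \ref{sec:slowdecay}: the events $\mathscr{E}_n$, the formula $\mathbb{P}(\mathscr{E}_n)=(1-p^2)^{n/2}(1-p)$ for $n$ even, and the geometric-series summation giving $\mathbb{E}[\mathscr{S}^c]\sim 4/p^2$. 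Your Riemann-integral derivation of $g$ in case (ii) is also numerically correct at the level of expectations: $\sum_n\bigl(1-e^{-c^2\min(n,2N-n)/2N}\bigr)\sim N\bigl(2-\frac{4}{c^2}(1-e^{-c^2/2})\bigr)=g(c^2/2)N$, and similarly for differences.

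The genuine gap is in case (ii), in two places. First, your missing-probability computation tacitly assumes the representations of a fixed target are independent. For sums this is true (the pairs $\{i,n-i\}$ representing a fixed $n$ are pairwise disjoint), but for differences it is false: the pairs $(i,i+d)$ and $(i+d,i+2d)$ share an element, so the representations of a fixed $d$ form chains and $\mathbb{P}(d\notin A-A)$ is not $(1-p^2)^{N-|d|}$; one needs a transfer-matrix or Janson/Suen-type correction (which does preserve the leading exponent, since $3$-term progressions with a fixed common difference are rare at $p=cN^{-1/2}$, but this must be proved). The paper itself flags this independence as the special feature that blocks naive generalization. Second, your concentration step is underspecified at exactly the critical point. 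Bounded-differences/Azuma fails outright: the worst-case effect of changing one coordinate on $\mathscr{S}$ is $\Theta(N)$, so McDiarmid gives fluctuations of order $N^{3/2}$ against a mean of $\Theta(N)$; and direct Chebyshev on $\mathscr{S}$ requires covariance bounds between the missing-events $\mathscr{E}_n,\mathscr{E}_{n'}$ (which share elements and are positively correlated by FKG) that you have not reduced to anything checkable. This is precisely why \cite{HM} --- and this paper's generalization --- route the critical case differently: through the Bonferroni-type truncation $\bigl||A+A|-\sum_{k=1}^m(-1)^{k-1}X_k\bigr|\le X_m$ of \eqref{eq:onesum}, where each $X_k$ counts $k$-sets of tuples generating the same value, together with a per-$X_k$ second-moment concentration (Lemma \ref{lem:strong_concentration}); the function $g$ then arises as the alternating series $\sum_k(-1)^{k-1}b_{2,k}c^{2k}$, which happens to sum to the closed form $2(e^{-x}-(1-x))/x$ when $h=2$. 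Without either that decomposition or a polynomial concentration inequality of Kim--Vu/Vu/Talagrand type, your upgrade from expectation to the ``$\sim$'' statement in the critical regime is a real hole, not a routine verification.
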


Notice there is a phase transition at $\delta = 1/2$, where $|A-A|$ goes from almost surely having twice as many elements as $|A+A|$ (when $\delta > 1/2$) to having the same number of elements to first order (when $\delta < 1/2$); further, an explicit, tractable formula is obtained for the relative sizes when $\delta = 1/2$ as a simple function of $c$.

The goal of this paper is to generalize this theorem to arbitrary combinations of sums and differences.

\subsection{Results}

Before stating our results, we need some combinatorial results. We use the extended definition of the binomial coefficient, setting $\ncr ab=0$ for integers $0 \le a<b$. A central result, which we use again and again, is the stars and bars (or cookie) problem: for any pair of positive integers $n,k$, the number of distinct $k$-tuples of non-negative integers that sum to $n$ is $\ncr{n+k-1}{k-1}$. Note this is equivalent to counting the number of solutions in non-negative integers to $x_1 + \cdots + x_k = n$. This is readily found. If we choose $k-1$ objects from $n+k-1$ (there are $\ncr{n+k-1}{k-1}$ ways to do so), we partition the remaining $n$ objects into $k$ sets, and there is a one-to-one correspondence between these partitions and our desired solutions.

In our investigations below we always choose elements for our set $A$ from $I_N := \{0, \dots, N\}$ independently with probability $p(N)=cN^{-\delta}$ for fixed $\delta\in\left(0,1\right)$ and $c>0$.

\begin{itemize}
\item Given a set $A$ we define its generalized sumset $A_{s,d}$ with $s$ sums and $d$ differences to be $A+\cdots+A-\cdots-A$; as we are only interested in cardinalities we may always assume $d \le s$.
\item We write $|A_{s,d}|$ for its size. We always use $h$ for the number of summands, so $h =s+d$. \item An $h_{(s,d)}$-tuple is a set of $h=s+d$ integers, $\left\{a_{1},\ldots, a_{s},a_{s+1},\ldots,a_{h}\right\}$.
\item If the associated sum $\sum_{i=1}^s a_i - \sum_{j=1}^d a_{s+j}$ equals $\lambda$ then we say the tuple generates $\lambda$. Note that the generalized sumset is the set of all numbers generated by $h_{(s,d)}$-tuples of elements of $A$.
\item Related to this is $R(n,s,d)$, which we define to be the number of ways to generate $n$ through $h_{(s,d)}$-tuples of integers drawn from $\{0, \dots, N\}$. As $R(n,s,d)$ counts all permutations equally, order matters; for example, if $a_{1}+a_{2}-a_{3}=n$, then $R(n,2,1)$ counts $(a_{1},a_{2},a_{3})$ and $(a_{2},a_{3},a_{1})$ as two different entities. As $N$ is fixed throughout our calculations and then sent to infinity only at the end, to simplify notation we write $R(n,s,d)$, though really it should be $R_N(n,s,d)$ to emphasize this dependence.
\end{itemize}


In the course of our investigations we encounter the following constants and functions. For $k$ a positive integer and $j\in(0,h/2)$, set
\begin{equation}\label{eq:eqnB}
b_{h,k} \ := \ \frac{1}{k!(h-1)!^{k}}\cdot 2\sum_{j \le h/2}j^{(h-1)k}\int_{0}^{1} \left(\sum_{i=0}^{j}(-1)^{i}\ncr{h}{i}\left(1-\frac{(i-t)}{j}\right)^{h-1}\right)^{k} dt.
\end{equation} These constants emerge in our phase transition function
\begin{equation}\label{eq:defnofgfunction}
g(x;s,d) \ := \ \sum_{k=1}^{\infty}(-1)^{k-1} \frac{b_{h,k}}{(s!d!)^k} x^{(s+d)k},
\end{equation} which for $h=s+d \ge 2$ converges for all $x$.

Our main result is the following.

\begin{thm}\label{thm:mainfastcriticaldecay} Let $h$ be a positive integer, $c > 0$ a real number, and choose pairs of integers $(s_i, d_i)$ with $s_i \ge d_i$ and $s_i+d_i = h$; for definiteness let $d_1 > d_2$. Consider subsets $A \subset I_N$ where each element of $I_N$ is independently chosen to be in $A$ with probability $p(N) = c N^{-\delta}$.

\begin{itemize}

\item For $\delta>\frac{h-1}{h}$, the set $A_{s_{i},d_{i}}$ with the larger $d_{i}$ is larger almost surely. In particular, as $N\to\infty$ with probability one we have $|A_{s_1,d_1}|/|A_{s_2,d_2}| = (s_2!d_2!) / (s_1!d_1!) + o(1)$.

\item If $\delta=\frac{h-1}{h}$ then almost surely $|A_{s_i,d_i}| \sim Ng(c;s_{i},d_{i})$ (with $g$ defined in \eqref{eq:defnofgfunction}), and thus with probability one $|A_{s_1,d_1}|/|A_{s_2,d_2}|$ is $g(c;s_1,d_1)/g(c;s_2,d_2) + o(1)$.

\end{itemize}
\end{thm}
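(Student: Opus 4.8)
The plan is to reduce everything to a first-moment computation of $\E[|A_{s,d}|]$ together with a concentration estimate, and to read off the phase transition from the size of the largest expected number of representations. Write $|A_{s,d}| = \sum_n X_n$ with $X_n = \mathbf{1}[n \in A_{s,d}]$, so that $\E[|A_{s,d}|] = \sum_n P(n \in A_{s,d})$. The central object is the expected number of ordered tuples generating $n$ with all entries in $A$, which to leading order is $\mu_n := R(n,s,d)\,p(N)^h$; tuples with a repeated entry contribute only $p^{<h}$ and are both fewer in number and individually smaller, so I would first show they are negligible. Generically a generated value $n$ arises from exactly $s!\,d!$ ordered distinct-entry tuples (permuting the $s$ added and the $d$ subtracted slots), so the expected number of \emph{distinct representations} of $n$ is $\mu_n/(s!\,d!)(1+o(1))$; the accidental coincidences beyond this obvious symmetry must also be shown to be of lower order.

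The phase transition then comes from the magnitude of $\mu_n$. Since $R(n,s,d)$ is an $(h-1)$-fold convolution supported on a range of length $\Theta(N)$ with $\sum_n R(n,s,d) = (N+1)^h$, we have $\max_n R(n,s,d) = \Theta(N^{h-1})$, whence $\max_n \mu_n = \Theta(N^{h-1}p^h) = \Theta(c^h N^{h-1-\delta h})$. This tends to $0$ precisely when $\delta > (h-1)/h$ and is $\Theta(1)$ when $\delta = (h-1)/h$. In the fast-decay regime $\delta > (h-1)/h$ every $\mu_n \to 0$ uniformly, so $P(n \in A_{s,d}) = \mu_n/(s!\,d!) + o(\mu_n)$ and
\[
\E[|A_{s,d}|] = \frac{p^h}{s!\,d!}\sum_n R(n,s,d)\,(1+o(1)) = \frac{((N+1)p)^h}{s!\,d!}(1+o(1)) \sim \frac{(Np)^h}{s!\,d!}.
\]
Because $s!\,d!$ is decreasing in $d$ along $s+d=h$ with $d\le s$, the combination with the larger $d_i$ has the smaller normalizing factor and hence the larger set, and the ratio of means is exactly $(s_2!\,d_2!)/(s_1!\,d_1!)$.

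At criticality $\delta = (h-1)/h$ I would replace the linear approximation by a Poisson approximation: the number of distinct representations of $n$ should be asymptotically Poisson with mean $\mu_n/(s!\,d!)$, so $P(n\in A_{s,d}) \to 1 - e^{-\mu_n/(s!\,d!)}$. Using the local limit law $R(Nt,s,d) \sim N^{h-1} f(t)$, where $f$ is the (shifted Irwin--Hall) density of a sum of $h$ independent uniforms and in particular depends only on $h$, one gets $\mu_{Nt}\to c^h f(t)$, and a Riemann sum yields
\[
\E[|A_{s,d}|] \sim N\int \left(1 - e^{-c^h f(t)/(s!\,d!)}\right) dt.
\]
Expanding $1-e^{-y}=\sum_{k\ge 1}(-1)^{k-1}y^k/k!$ and identifying $b_{h,k} = \tfrac{1}{k!}\int f(t)^k\,dt$ reproduces $Ng(c;s,d)$ with $g$ as in \eqref{eq:defnofgfunction}; since only the factor $(s!\,d!)^{-k}$ depends on $(s,d)$, the claimed ratio $g(c;s_1,d_1)/g(c;s_2,d_2)$ follows (and its small-$c$ limit recovers the fast-decay ratio).

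Finally, to upgrade these expectations to the almost-sure ``$\sim$'' statements I would prove concentration by the second moment method, bounding $\mathrm{Var}(|A_{s,d}|) = \sum_{n,m}\mathrm{Cov}(X_n,X_m)$; the covariances are controlled by counting pairs of tuples generating $n$ and $m$ that share elements, and each shared element drops the power of $p$, which forces the off-diagonal contribution to be $o(\E[|A_{s,d}|]^2)$. Chebyshev then gives the result for each $(s_i,d_i)$, and intersecting the two high-probability events controls the ratio. The main obstacle is the critical regime: rigorously justifying the Poissonization uniformly in $n$, simultaneously showing negligibility of repeated-entry tuples and of accidental collisions between distinct symmetry classes, handling the fringe values of $n$ where the local limit approximation degrades, and carrying the variance bound uniformly through the transition, where representations of nearby $n$ are genuinely correlated. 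This is precisely the delicate bookkeeping that the constants $b_{h,k}$ are designed to package.
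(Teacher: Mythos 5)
Your outline is correct and lands on exactly the paper's constants --- in particular your identification $b_{h,k} = \tfrac{1}{k!}\int_0^h f_h(x)^k\,dx$, with $f_h$ the Irwin--Hall density of a sum of $h$ uniforms, is precisely \eqref{eq:eqnB} after the substitution $x = j+t$ (by Lemma \ref{lem:R(,n,s,d)} one has $R(Nt,s,d) \sim N^{h-1} f_h(t)$ once $n'=n+dN$ is rescaled), and this is a genuinely clarifying way to see where \eqref{eq:defnofgfunction} comes from. Your route through the critical regime, however, differs from the paper's. You Poissonize per value, claiming $P(n \in A_{s,d}) \to 1 - e^{-\mu_n/(s!d!)}$ and integrating against the local limit law; making that rigorous requires a Chen--Stein or Suen-type argument, which is available here since pairs of representations of a fixed $n$ sharing an element number $O(N^{2h-3})$ and contribute $O(N^{2h-3}p^{2h-1}) = O(N^{-1/h})$ at criticality. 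The paper never proves a Poisson limit: it introduces the global binomial moments $X_k$ (counting unordered $k$-tuples of $h_{(s,d)}$-tuples generating a common value), computes $\E(X_k) \sim \frac{b_{h,k}c^{hk}}{(s!d!)^k}N^{(h-1)k+1-hk\delta}$ via Lemmas \ref{prop:e_0} and \ref{thm:EX_k,}, concentrates each $X_k$ by the second-moment bound on $\triangle = \sum_{\alpha\sim\beta} P(Y_\alpha \cap Y_\beta)$ in Lemma \ref{lem:strong_concentration}, and then sums the alternating series using the two-sided Bonferroni inequality $\bigl| |A_{s,d}| - \sum_{k=1}^m (-1)^{k-1}X_k \bigr| \le X_m$ together with $b_{h,k}c^{hk} \to 0$. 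That device is exactly what converts your Poisson heuristic into rigorous upper and lower bounds using only moment estimates: expanding your $1-e^{-y}$ termwise reproduces $\sum_k (-1)^{k-1}\E(X_k)$, so the obstacle you correctly flag (uniform Poissonization, including fringe $n$ where $\mu_n = o(1)$ or the local limit degrades) is precisely what the paper sidesteps, the fringe, repeated-entry, and accidental-collision errors being absorbed once in Lemma \ref{lemmarepeats} and in the $\xi_{\ell,k}$ bound inside Lemma \ref{thm:EX_k,}. Your fast-decay argument coincides with the paper's (there $|A_{s,d}| \sim X_1$ with $X_2 = o(X_1)$, which is your first-moment statement, and $b_{h,1}=1$ since $f_h$ integrates to one), and your variance bound for $\sum_n \mathbf{1}[n \in A_{s,d}]$ is the same counting as the paper's $\triangle$, giving $\mathrm{Var} = O\left((Np)^{2h-1}\right) = o(N^2)$ at criticality. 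In short: same combinatorial core, different packaging --- your per-$n$ Poissonization buys probabilistic transparency and a clean closed form for $b_{h,k}$ at the cost of an extra (standard but nontrivial) dependency-graph step, while the paper's $X_k$/Bonferroni organization buys rigor from bare moments; if you replace the Poisson limit by the truncated inclusion-exclusion, your outline becomes a complete proof along the paper's lines.
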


Thus for two sets with the total number of summands fixed, the set with more minus signs is larger almost surely when $\delta > \frac{h-1}{h}$, so there are more distinct
elements in the generalized sumset with more minus signs. There is a phase transition in the behavior when $\delta$ passes from being greater than $\frac{h-1}{h}$ to equaling $\frac{h-1}{h}$.

The proof is similar to that in \cite{HM}, which does the $h=2$ case. The idea is to bound the number of times distinct $h_{(s,d)}$-tuples generate the same element. This allows us to discount the number of repeated elements in the generalized sumset. If we already know that most elements are distinct, then simple combinatorics allows us to compare their sizes; however, as $\delta$ gets smaller, we choose more and more elements for $A$, which leads to more repeated elements in the generalized sumset. The analysis is significantly easier when there are fewer repeated generalized sums, as then the sizes of the two generalized sumsets are well separated. Specifically, in the case of fast decay, the analysis follows from Chebyshev's inequality. The case of critical decay is significantly more challenging and requires recent strong concentration results. We first must show that we can estimate the number of $h_{(s,d)}$-tuples with a constant sum by the number of $h_{(s,d)}$-tuples with $h$ distinct elements. We then show that if we partition our $h_{(s,d)}$-tuples into equivalence classes based on the number of other $h_{(s,d)}$-tuples with the same sum, the class of singletons is the largest, so most $h_{(s,d)}$-tuples generate a unique integer. We then define our function $g(x;s,d)$ in terms of $|A_{s,d}|$.

In Section \ref{sec:secR(n,s,d)} we define $R(n,s,d)$ to count possible values for $A_{s,d}$. In Section \ref{sec:genhegmilrandvars}
we bound the expected number of repeated elements, and in Section \ref{sec:Strong-Concentration} we show that the number of repeated elements is close to its expected value. In Section \ref{sec:Case-i,-Theorem} we study the case of fast decay. In Section \ref{sec:Case-ii,-Theorem} we study the case
of critical decay. We end with a discussion of future work.


\section{Strong Concentration}\label{sec:strongconcprelim}

In this section, we first derive formulas for quantities related to the number of $h_{(s,d)}$-tuples generating a given number. These results are key ingredients in the strong concentration analysis.

\subsection{Determining $R(n,s,d)$}\label{sec:secR(n,s,d)}

Our first step is to determine a tractable formula for $R(n,s,d)$, the number of $h_{(s,d)}$-tuples of integers drawn from $\{0, \dots, N\}$ that generate $n$.

\begin{lem}\label{lem:R(,n,s,d)} Let $n' := n+dN$. We have
\be R(n,s,d)\ = \ \sum_{i=0}^{\lfloor\frac{n'}{N}\rfloor-1}(-1)^{i}\ncr{h}{i}\ncr{n'-i(N+1)+h-1}{h-1}.\ee
\end{lem}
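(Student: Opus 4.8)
The plan is to convert the signed counting problem into an ordinary (all-positive) composition problem and then apply inclusion--exclusion. The one genuinely useful idea is a reflection that turns subtraction into addition: for each difference variable set $b_j := N - a_{s+j}$. Since $x \mapsto N - x$ is a bijection of $\{0,\dots,N\}$ onto itself, the map $(a_1,\dots,a_s,a_{s+1},\dots,a_h) \mapsto (a_1,\dots,a_s,b_1,\dots,b_d)$ is a bijection of $\{0,\dots,N\}^h$, and under it the defining relation $\sum_{i=1}^s a_i - \sum_{j=1}^d a_{s+j} = n$ becomes $\sum_{i=1}^s a_i + \sum_{j=1}^d b_j = n + dN = n'$. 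Hence $R(n,s,d)$ equals the number of $h$-tuples $(x_1,\dots,x_h) \in \{0,\dots,N\}^h$ with $x_1 + \cdots + x_h = n'$, so the problem is now one of counting bounded compositions.

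Second, I would count these by inclusion--exclusion on the upper-bound constraints. Ignoring the constraints $x_\ell \le N$, the stars-and-bars count recalled in the introduction gives $\ncr{n'+h-1}{h-1}$ non-negative solutions of $x_1 + \cdots + x_h = n'$. To impose $x_\ell \le N$ (equivalently $x_\ell < N+1$) I remove the overcounted solutions: for a fixed set $S$ of $i$ indices, the substitution $x_\ell = (N+1) + y_\ell$ for $\ell \in S$ (with $y_\ell \ge 0$) is a bijection between the solutions in which every $x_\ell$ with $\ell \in S$ exceeds $N$ and the arbitrary non-negative solutions of $\sum x = n' - i(N+1)$, of which there are $\ncr{n'-i(N+1)+h-1}{h-1}$. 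Summing over all $\ncr{h}{i}$ choices of $S$ with the alternating sign $(-1)^i$ yields $\sum_{i}(-1)^{i}\ncr{h}{i}\ncr{n'-i(N+1)+h-1}{h-1}$, which is the asserted expression.

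Finally, I would pin down the range of summation. With the extended convention $\ncr ab = 0$ for $0 \le a < b$ (and treating negative top entries as $0$, as is standard in such an inclusion--exclusion), every term with $n' - i(N+1) < 0$ vanishes, so the sum truncates at the largest index $i$ with $i(N+1) \le n'$; I would also record the admissible range $0 \le n' \le hN$ forced by $a_i \in \{0,\dots,N\}$. I expect no conceptual obstacle here: once the reflection is in place everything reduces to the standard bounded-composition count, so the only real work is the clerical verification of the signs, the shift by $i(N+1)$, and the exact cutoff of the index $i$. The single point meriting genuine care is reconciling the natural termination of the sum with the displayed upper endpoint $\lfloor n'/N\rfloor - 1$, i.e.\ confirming that the stated cutoff retains every nonvanishing term.
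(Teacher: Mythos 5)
Your derivation follows exactly the same route as the paper's own proof, just run in the opposite order: the paper first settles the $d=0$ case by stars and bars plus inclusion--exclusion with the shift $x_j = y_j + (N+1)$, and then reduces general $d$ to it by the same reflection $a_j' = N - a_{s+j}$ that you perform at the outset. Everything up to the last step is correct. But the one step you deferred --- ``confirming that the stated cutoff retains every nonvanishing term'' --- cannot in fact be completed, and your instinct to single it out was right. The natural truncation of your inclusion--exclusion sum is at the largest $i$ with $i(N+1) \le n'$, i.e.\ $i \le \lfloor n'/(N+1)\rfloor$, and this does \emph{not} coincide with the displayed endpoint $\lfloor n'/N\rfloor - 1$. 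Writing $n' = jN + r$ with $0 \le r < N$, one has $\lfloor n'/(N+1)\rfloor = \lfloor n'/N\rfloor$ whenever $r \ge j$ (the generic situation, since $j \le h$ is bounded while $r$ ranges over $[0,N)$), so the displayed sum typically omits the nonvanishing term $i = \lfloor n'/N\rfloor$. Concrete failures: for $0 \le n' < N$ the displayed upper limit is $-1$, so the formula returns the empty sum $0$ even though $R$ is plainly positive (e.g.\ $h=2$, $N=10$, $n'=5$ has six ordered representations); and for $h=2$, $N=10$, $n'=11$ the displayed sum gives $\binom{12}{1} = 12$ while the true count is $10$, which the corrected limit $\lfloor 11/11\rfloor = 1$ recovers via $12 - \binom{2}{1}\binom{1}{1} = 10$.

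To be clear, this is an off-by-one in the lemma as printed, not a defect of your argument: the paper's proof simply ends with ``the claim now follows by inclusion-exclusion'' and never verifies the summation endpoint. The clean statement, with the convention that binomial coefficients with negative upper entry vanish, is $R(n,s,d) = \sum_{i=0}^{\lfloor n'/(N+1)\rfloor} (-1)^i \binom{h}{i} \binom{n'-i(N+1)+h-1}{h-1}$ (equivalently, sum over all $0 \le i \le h$). You should finish your proof by asserting this corrected cutoff rather than reconciling with the displayed one. Reassuringly, the error does not propagate: in the proof of Lemma \ref{prop:e_0} the paper in effect sums $i$ from $0$ to $\lfloor n'/N\rfloor$ on each interval $[jN,(j+1)N)$ (its own indexing ``$j = \lfloor n'/N\rfloor - 1$'' is inconsistent with the range of $S_j(N)$ there), which matches the corrected formula up to a lower-order boundary contribution; indeed, specializing the resulting constants $b_{2,k} = 2/(k+1)!$ reproduces the Hegarty--Miller function $g$ of Theorem \ref{thm:mainuniform}, which would fail if the literal endpoint $\lfloor n'/N\rfloor - 1$ (giving $R = 0$ on all of $[0,N)$) had been used.
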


\begin{proof} We first assume $d=0$, so all signs are positive and $n'=n$. By the stars and bars / cookie problem, the number of ways to write $n$ as a sum of $h$ non-negative integers is $\ncr{n+h-1}{h-1}$. As it will be important later, it is worth noting that this treats $4+3+1$ and $3+4+1$ as two different representations. Also, note this is equivalent to solving $x_1+\cdots+x_h = n$ with each $x_i$ a non-negative integer. We desire each summand to be in $I_N := \{0,\dots,N\}$, and thus $\ncr{n+h-1}{h-1}$ may overcount. We remedy this by using inclusion-exclusion to remove representations with summands exceeding $N$.

We first remove all representations where at least one summand exceeds $N$. There are $\ncr{h}{1}$ ways to choose which summand this is. We write that summand as $x_j = y_j+N+1$, and write $x_j=y_j$ for the remaining summands. Thus the number of representations where summand $j$ exceeds $N$ and the other summands are at least zero is the number of solutions to $y_1 + \cdots + y_h = n-(N+1)$, which is just $\ncr{n-(N+1)+h-1}{h-1}$. If instead $i$ summands are greater than $N$, we would get $y_1 + \cdots + y_h = n - i(N+1)$, for $\ncr{n-i(N+1)+h-1}{h-1}$ solutions. The claim now follows by inclusion-exclusion.

We only need trivial modifications if $d>0$. For the $d$ elements occurring with a minus sign, $a_{s+1}, \dots, a_{s+d}$, write $a_j' = N-a_j$. Then \be a_1 + \cdots + a_s - a_{s+1} - \cdots - a_{s_j} \ = \ n \ee becomes \be a_1 + \cdots + a_s + a_{s+1}' + \cdots + a_{s+d}' \ = \ n + dN, \ee reducing us to the first case.
\end{proof}

In our strong concentration applications we need not $R(n,s,d)$, but the closely related quantity $R_{{\rm distinct}}(n,s,d)$, which counts the number of representations of $n$ by $h=s+d$ distinct elements. The next lemma shows that these two quantities differ in a lower order term (relative to $N$).

\begin{lem}\label{lemmarepeats} The number of $h_{(s,d)}-tuples$ which generate
$n$ using $h$ distinct elements is of a higher order than repeated
elements. In particular, if $n' = n + dN$ then \be R(n,s,d) \ = \ R_{{\rm distinct}}(n,s,d) + O(N^{h-2}). \ee  \end{lem}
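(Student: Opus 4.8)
The plan is to show that the number of $h_{(s,d)}$-tuples generating $n$ with at least one repeated element (i.e.\ a coincidence $a_i = a_j$ for some $i \neq j$) is $O(N^{h-2})$, so that $R(n,s,d) - R_{\rm distinct}(n,s,d) = O(N^{h-2})$. First I would note the trivial but crucial bound $R(n,s,d) = O(N^{h-1})$: once we fix any $h-1$ of the coordinates freely in $\{0,\dots,N\}$, the constraint $\sum_{i=1}^s a_i - \sum_{j=1}^d a_{s+j} = n$ determines the last coordinate (if it has an admissible value at all), giving at most $(N+1)^{h-1}$ tuples. This matches the leading behavior one reads off from Lemma \ref{lem:R(,n,s,d)}, where the dominant term $\ncr{n'+h-1}{h-1}$ is a polynomial in $N$ of degree $h-1$.

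The main step is to bound $R_{\rm repeat}(n,s,d) := R(n,s,d) - R_{\rm distinct}(n,s,d)$, the number of generating tuples in which some pair of coordinates is equal. I would argue by inclusion over which pair coincides: there are $\ncr{h}{2}$ choices of a pair $(i,j)$ that we force to be equal. Having identified two coordinates, a tuple generating $n$ with $a_i = a_j$ is determined by choosing values for the remaining $h-2$ free coordinates plus the common value of the merged pair, subject to the single linear sum constraint. Concretely, after imposing $a_i = a_j$ we have effectively $h-1$ distinct variables constrained by one equation, so the same ``fix all but one'' argument shows there are at most $(N+1)^{h-2}$ such tuples for each choice of pair. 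Summing over the $\ncr{h}{2}$ pairs gives $R_{\rm repeat}(n,s,d) \le \ncr{h}{2}(N+1)^{h-2} = O(N^{h-2})$, and since a union bound (rather than a precise inclusion--exclusion) only loses a constant factor, this suffices for the stated error term. This yields
\be
R(n,s,d) \ = \ R_{\rm distinct}(n,s,d) + O(N^{h-2}),
\ee
with an implied constant depending only on $h$ (not on $n$).

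The one point requiring care, and the place I expect the mild obstacle to live, is verifying that the reduction ``$a_i = a_j$ leaves $h-1$ genuinely free variables constrained by one equation'' is uniform in $n$ and respects the ranges $a_k \in \{0,\dots,N\}$, since the merged variable now ranges over a set whose size is still $O(N)$ regardless of the signs attached to the two merged coordinates (the coefficient of the common value in the constraint is one of $+2, 0, -2$, but in every case the count of admissible completions is bounded by fixing all but one coordinate). I would handle this exactly as in Lemma \ref{lem:R(,n,s,d)}: the substitution $a_j' = N - a_j$ for negatively-signed coordinates converts every difference into a sum, after which all coincidence-counts are counts of nonnegative integer solutions to a single linear equation in $h-1$ variables over a bounded box, each bounded by $O(N^{h-2})$ via stars and bars. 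Because the bound is a uniform $O(N^{h-2})$ independent of $n$, it feeds directly into the later strong-concentration arguments, which only need that coincidences are negligible at the scale $N^{h-1}$ at which the leading counts live.
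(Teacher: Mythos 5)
Your proof is correct and takes essentially the same approach as the paper's: the paper's one-line argument is precisely your degrees-of-freedom count (one coordinate lost to the forced coincidence $a_i = a_j$, one to the constraint that the signed sum equal $n$), giving at most $h-2$ free choices and hence a uniform $O(N^{h-2})$ bound. Your extra care over the $\ncr{h}{2}$ choices of the coinciding pair and the sign cases for the merged coordinate merely makes explicit the constant the paper leaves implicit.
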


\begin{rek} If $(n')^{h-2} = o(N)$, the error term in Lemma \ref{lemmarepeats} exceeds the main term. While a more careful analysis gives a better error estimate, the bound above suffices for our applications as the main term is summed over a large enough regime that its contribution exceeds that of the error. \end{rek}

\begin{proof} If there is at least one repeated element, there are at most $h-2$ free choices for the summands (we lose one choice for the repetition, and one choice as the sum must equal $n$). Thus the contribution from representations of $n$ with a repeated element is at most $O(N^{h-2})$. \end{proof}


\subsection{Generalizing Hegarty-Miller's Random Variables}\label{sec:genhegmilrandvars}

Hegarty and Miller \cite{HM} introduce some useful random variables to prove their strong concentration results. We begin with a generalization of these quantities, and then derive useful bounds which give the needed asymptotic relations.

For a set $A$, define \begin{eqnarray}
 & A_{k} \ := \ \bigg\{\left\{\left\{ a_{1},...,a_{h}\right\}, \ldots, \left\{ a_{(k-1)h+1},...,a_{kh}\right\}\right\}: \sum_{i=1}^{s}a_{i}-\sum_{i=s+1}^{h}a_{i}\ = \ \cdots \nonumber\\ & \ \ \ \ \ \ \ \ \ \ = \ \sum_{i=(k-1)h+1}^{kh-d}a_{i}-\sum_{i=kh-d}^{kh}a_{i}\bigg\},
\end{eqnarray} and let $X_{k}=|A_{k}|$. Note that now the ordering of elements \emph{within} the $h$-tuples matters (because subtraction is not commutative), so we are looking at unordered $k$-tuples of \emph{ordered} elements. The dependence on the ordering is, however, weak. Given any one of these $k$-tuples, we can permute the first $s$ elements or permute the last $d$ elements without changing the number it generates, and thus such a permutation is the same element of the $k$-tuple. If all the elements of an $h$-tuple are distinct (actually, all we need are no repeats among the first $s$ and no repeats among the final $d$), then there are $s!d!$ ways to reorder the tuple \emph{without} changing the number it generates, and thus all of these correspond to the same \emph{set} (remember, the only way the ordering matters in the set of $h$ elements is which are the first $s$ elements and which are the last $d$). Thus, if all elements are distinct, there is overcounting by a factor of $s!d!$; we must take this into account later.

We want to study these $k$-tuples because they shed light on how
many repeated elements are in the generalized sumset. We have $k$-tuples of $h_{(s,d)}$-tuples, so each $k$-tuple has
a total of $hk$ integers. We place $h_{(s,d)}$-tuples in the same
$k$-tuple if they all generate the same number. Intuitively, because we need to subtract out repeated elements, all $h_{(s,d)}$-tuples within the same $k$-tuple only count once
in our generalized sumset, so counting these $k$-tuples is equivalent
to counting $h_{(s,d)}$-tuples.
To make this more concrete, we present a short example.  If $h=3$,
$s=3$, and $d=0$, then $\left\{ 3,4,7\right\} $, $\left\{ 5,6,3\right\} $,
$\left\{ 1,11,2\right\} $, and $\left\{ 1,5,8\right\} $ would all
be in the same $k$-tuple because they all sum to 14. If these four
$h_{(s,d)}$-tuples were the only $h_{(s,d)}$-tuples that generated
$14$, then we would have $A_{4}=\left\{ \left\{ 3,4,7\right\} ,\left\{ 5,6,3\right\} ,\left\{ 1,11,2\right\} ,\left\{ 1,5,8\right\} \right\} $.
$X_{k}$ counts the number of $k$-tuples, so the number of times
there are exactly $k$ $h_{(s,d)}$-tuples generating the same number.
For example, if we also only had $4$ $h_{(s,d)}$-tuples that generated
$5$, and $5$ and $14$ were the only two numbers generated, then
$X_{4}=2$ for the two different numbers generated by exactly $4$
$h_{(s,d)}$-tuples.

The reason why $A_{1}$ is so important is that $h_{(s,d)}$-tuples
are only in $A_{1}$ if no other $h_{(s,d)}$-tuples generate that
number. We want the number of single $h_{(s,d)}$-tuples
in $A_{1}$ (counted by $X_{1}$) to be the largest because then we
know that most $h_{(s,d)}$-tuples generate a unique sum. The larger
$k$ is, the more constant sums we must have. We have $k$-tuples of $h_{(s,d)}$-tuples, and within each $k$-tuple, all $h_{(s,d)}$-tuples contained inside generate the same number.
 If there were another $h_{(s,d)}$-tuple that generated the
same number, then the two $h_{(s,d)}$-tuples would be in $A_{2}$.
Therefore, $X_{1}$ counts the number of \textit{distinct} sums among
our $h_{(s,d)}$-tuples, which is important because we will show that
this is the higher order than $X_{k}$ for any $k>1$, so $X_{1}$
becomes critically important in measuring the size of the generalized
sumset.
So, $X_{1}$ counts the number of $h_{(s,d)}$-tuples that generate a
distinct sum, because if an $h_{(s,d)}$-tuple is in $A_{1}$, then
there are no other $h_{(s,d)}$-tuples that generate the same number.
Similarly, $X_{2}$ counts how many $h_{(s,d)}$-tuples generate the
same sum as exactly one other $h_{(s,d)}$-tuple.
Therefore, if we know
that $X_{2}=o(X_{1})$, then we know there are significantly more $h_{(s,d)}$-tuples
with a unique sum than those with any number of repeated sums (because
any $k$-tuples in $A_{k}$ for $k>\ell$ are also in $A_{\ell}$).

The goal is to generalize Theorem 1.1 and Lemma 2.1 of \cite{HM}. To do this we must bound the number of repeated elements in $A_{s,d}$. \emph{If} we knew that our generalized sumset contains mostly distinct sums (so most $h_{(s,d)}$-tuples generate a distinct integer), then
a simple combinatorial argument and Chebyshev's theorem would suffice to prove Theorem \ref{thm:mainfastcriticaldecay}. In the case of fast decay, $\delta>\frac{h-1}{h}$, the number of repeated elements is a lower order than the number of distinct elements. The case of critical
decay, $\delta=\frac{h-1}{h}$, is more difficult because now the number of our repeated elements is of the same order as the number of distinct elements. Intuitively, the smaller $\delta$ is, the more elements from $\{0, \dots, N\}$ are in $A$, so the more likely it is
that two $h_{(s,d)}$-tuples generate the same element. Thus a more sophisticated argument is needed to find the relevant cardinalities.

We first introduce some terminology.

\begin{definition} By \textbf{Type $0$} we mean the $k$-tuples with $hk$ distinct elements of $I_{N}$, while \textbf{Type $i$} refers to $k$-tuples with $i$ repeated elements. \end{definition}

By repeated elements, we mean total number of elements that would
need to be removed for all elements to be distinct. For example, in
the $7$-tuple $\left\{ 1,1,1,2,2,3,4\right\} $, we say there are
three repeated elements because we would need to remove $\left\{ 1,1,2\right\} $
for all remaining elements to be distinct. For a fixed $k$-tuple $\alpha$, since we draw our $A$ from a binomial model with parameter $p(N) = c N^{-\delta}$, we know
\begin{equation}\label{eq:typet}
{\rm Prob}(\alpha {\rm\ is\ of\ Type}\ t)\ =\ncr{hk}t \ c^{kh-t}N^{-\delta(kh-t)}.
\end{equation}
Equation \eqref{eq:typet} holds because the probability of choosing
any element is independent of the probability of choosing any other
element. We need a binomial coefficient because we have to choose $t$ of the $k$-tuple's total $hk$ elements to repeat. Note that \eqref{eq:typet} is for a fixed $k$-tuple, but we do not know the locations of the repeated elements, so the binomial coefficient is necessary for all possible combinations of repeats.

Let $\xi_{i,k}(N)$ be the number of $k$-tuples of type $i$. Note that we have $k$-tuples of $h$-element sets; in those $h$ element sets, the ordering of elements within matters a bit, though we may permute the first $s$ or permute the last $d$ without changing the number it generates. As in equation (2.4) of \cite{HM},
\begin{equation}
\E(X_{k})\ = \ \sum_{i=0}^{hk-1}\xi_{i,k}(N)p(N)^{(k-i)h}.\label{eq:1}
\end{equation}
This holds because we are summing over all possible types of $k$-tuples times the probability of choosing a $k$-tuple of that type, so we get the expected number of $k$-tuples.

Similar to \cite{HM}, for $\delta \ge \frac{h-1}{h}$, the only contribution to \eqref{eq:1} that matters is from the first term. This is equivalent to estimating the number of $k$-tuples by only considering the number of $k$-tuples with no repeated elements. We first estimate the contribution from this term, and then bound the contribution from the remaining ones.


\begin{lem}\label{prop:e_0} We have
\begin{equation}
\xi_{0,k}(N)\ \sim\ \frac{b_{h,k}}{(s!d!)^k} N^{(h-1)k+1}.
\end{equation}
The error above is $O(N^{(h-2)(k-1)+1})$, and $b_{h,k}$ is defined in
\eqref{eq:eqnB}.
\end{lem}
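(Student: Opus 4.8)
The plan is to count the Type-$0$ $k$-tuples generating a fixed value $n$ and then sum over $n$. Write $R_{\rm distinct}(n,s,d)$ for the number of \emph{ordered} $h_{(s,d)}$-tuples with $h$ distinct entries generating $n$, and let $D(n)$ be the number of ordered $k$-tuples of such ordered representations of $n$ whose $hk$ entries are all distinct. Because Type $0$ forces all $hk$ elements to be distinct, the $k$ constituent $h_{(s,d)}$-tuples are automatically pairwise distinct and each has distinct entries; hence each unordered Type-$0$ $k$-tuple is counted exactly $k!\,(s!d!)^{k}$ times by $D(n)$ --- a factor $k!$ for ordering the $k$ blocks and a factor $s!d!$ per block for permuting the first $s$ and the last $d$ entries (which never changes the generated value). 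Therefore
\[
\xi_{0,k}(N)\ =\ \frac{1}{k!\,(s!d!)^{k}}\sum_{n} D(n).
\]
First I would show $D(n) = R_{\rm distinct}(n,s,d)^{k} - (\text{cross-sharing terms})$, where the subtracted terms count $k$-tuples in which two distinct blocks share an entry; fixing the shared value costs one degree of freedom, so after summing over $n$ these contribute $O(N^{(h-1)k})$, a full power of $N$ below the main term.

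Next I would pass from $R_{\rm distinct}$ to $R(n,s,d)$ and then to a closed integral. By Lemma~\ref{lemmarepeats}, $R_{\rm distinct}(n,s,d) = R(n,s,d) + O(N^{h-2})$, and by Lemma~\ref{lem:R(,n,s,d)}, writing $n' = n+dN$ and expanding each binomial coefficient as a polynomial, one gets the uniform estimate
\[
R(n,s,d)\ =\ N^{h-1} f_{h}\!\left(\tfrac{n'}{N}\right) + O(N^{h-2}),\qquad f_{h}(x)\ :=\ \frac{1}{(h-1)!}\sum_{i=0}^{\lfloor x\rfloor}(-1)^{i}\ncr{h}{i}(x-i)^{h-1},
\]
where $f_{h}$ is the Irwin--Hall density (the density of a sum of $h$ independent uniforms on $[0,1]$), supported on $[0,h]$. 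Raising to the $k$-th power and summing over $0\le n'\le hN$, the sum is a Riemann sum for an integral:
\[
\sum_{n} R(n,s,d)^{k}\ =\ N^{(h-1)k+1}\int_{0}^{h} f_{h}(x)^{k}\,dx \,+\, (\text{lower order}).
\]

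It then remains to identify $\int_{0}^{h} f_{h}(x)^{k}\,dx$ with $k!\,b_{h,k}$. On $x\in[j,j+1)$ I substitute $x=j+t$, so that $\lfloor x\rfloor=j$ and $f_{h}(j+t)=\frac{1}{(h-1)!}\sum_{i=0}^{j}(-1)^{i}\ncr{h}{i}(j-i+t)^{h-1}$; pulling out $j^{h-1}$ turns $(j-i+t)^{h-1}$ into $j^{h-1}\bigl(1-\tfrac{i-t}{j}\bigr)^{h-1}$, which reproduces the inner factor of \eqref{eq:eqnB}. The reflection symmetry $f_{h}(x)=f_{h}(h-x)$ pairs $[j,j+1)$ with $[h-1-j,h-j)$, contributing the factor $2$ and restricting the sum to $j\le h/2$. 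Collecting constants gives $\int_{0}^{h} f_{h}^{k} = k!\,b_{h,k}$, and combining with the display for $\xi_{0,k}$ yields $\xi_{0,k}(N)\sim \frac{b_{h,k}}{(s!d!)^{k}} N^{(h-1)k+1}$.

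The main obstacle is the error bookkeeping needed to reach the stated $O(N^{(h-2)(k-1)+1})$ rather than merely $o$ of the main term. The three error sources --- cross-sharing, the $R_{\rm distinct}$-versus-$R$ replacement, and the Riemann-sum (Euler--Maclaurin) discrepancy --- are each only one power of $N$ below the leading term when bounded crudely, which already secures the asymptotic ``$\sim$'' but is weaker than claimed. To sharpen the bound one must exploit that, as noted in the Remark after Lemma~\ref{lemmarepeats}, the repeat correction is governed by $(n')^{h-2}$ and is negligible except in the edge regime $n'\ll N$, where the main term is itself small; a careful split of the $n$-sum into an edge part and a bulk part, using the high-order vanishing of $f_{h}$ at the endpoints to kill boundary contributions, is what drives the error down. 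I would treat this split as the technical heart of the proof.
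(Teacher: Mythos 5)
Your proposal follows essentially the same route as the paper's proof: the paper writes $\xi_{0,k}(N)=\sum_{n}\binom{R(n,s,d)/s!d!+O(N^{h-2})}{k}$ (the binomial coefficient supplying your $k!$-and-distinct-representations bookkeeping, with cross-sharing and within-tuple repeats absorbed into the $O(N^{h-2})$ correction via Lemma \ref{lemmarepeats}), then uses Lemma \ref{lem:R(,n,s,d)}, approximates the sum over each block $n'\in[jN,(j+1)N]$ by an integral, and performs exactly your substitution $x=(j+t)N$ with the symmetry factor $2$ and the restriction $j\le h/2$ --- your Irwin--Hall density $f_h$ is simply a clean name for its integrand. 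Your closing worry that crude bounds give only one power of $N$ below the main term, rather than the stated $O(N^{(h-2)(k-1)+1})$, is consistent with the paper itself, which declares its $\sim$-errors merely ``at least one order smaller in $N$'' and defers the sharper constant to a sketch, so nothing essential is missing from your argument.
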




\begin{proof} Because we have to sum over all $n$ in the interval to count how many times a $k$-tuple can generate the same number, the number of $k$-tuples of Type $0$ is
\begin{equation}
\xi_{0,k}(N)\ = \ \sum_{n=-dN}^{sN}\ncr{R(n,s,d) / s!d! + O(N^{h-2})}{k},\label{eq:e_onesum}
\end{equation} where $R(n,s,d)$ is the number of $h_{(s,d)}$-tuples elements in $\{0,\dots,N\}$ that generate $n$. The error is because $\xi_{0,k}(N)$ counts \emph{distinct} tuples, while $R(n,s,d)$ allows repeats; however, our earlier analysis showed that the number of tuples with repeated elements is lower order (this is because $h$ is fixed and $N$ tends to infinity). From the Binomial Theorem and standard bounds on approximating binomial coefficients with the largest term (specifically, $\ncr{f(n)}m=\frac{f(n)^{m}}{m!}+O(f(n)^{m-1})$), we find
\be\label{epzero} \xi_{0,k}(N)\ = \ \sum_{n=-dN}^{sN}\ncr{R(n,s,d) / s!d!}{k} + O\left(\sum_{n=-dN}^{sN}\ncr{N^{h-2}}{k}\right) \ \sim \ \sum_{n=-dN}^{sN}\ncr{R(n,s,d) / s!d!}{k}.
\end{equation}

Letting $n' = n + dN$ as before, define
\begin{equation}
S_{j}(N) \ :=\
\sum_{n'=jN}^{(j+1)N}\ncr{R(n',s,d)/s!d!}{k}.\label{eq:S_j}
\end{equation}

We use the notation $S_{j}(N)$ to sum over all possible $n$ in $R(n,s,d)$ in one of $h$ intervals of length $n$. In $S_{j}(N)$,
$j$ gives the index of the interval. From \eqref{epzero}, we see that it is useful to break $\xi_{0,k}(N)$ into these intervals
in order to compute the total sum. To distinguish between $R(n,s,d)$ and $S_{j}(N)$, recall that $R(n,s,d)$ is for a fixed $n$, while
$S_{j}(N)$ is for a fixed interval of length $N$.

Assume $h$ is even (the case of $h$ odd is similar). We first approximate $R(n,s,d)$. Let $j = \lfloor\frac{n'}{N}\rfloor-1$. Assume $j>0$; the case of $j= 0$ follows similarly, and we mostly omit the details. We have
\begin{eqnarray}\label{eq:R(n)approx}
R(n',s,d) & \  = \ & \sum_{i=0}^{\lfloor\frac{n'}{N}\rfloor-1}(-1)^{i}\ncr{h}{i}\ncr{n'-i(N+1)+h-1}{h-1} \nonumber\\ &
\ =\ & \sum_{i=0}^{j}(-1)^{i}\ncr hi\frac{(n'-iN)^{h-1}}{(h-1)!} + O(N^{h-2});
\end{eqnarray} this follows from standard approximation for the binomial coefficients and the Binomial Theorem.\\

\noindent \textbf{For the rest of this subsection, in all the analysis below the error term in the asymptotic relations denoted by $\sim$ are at least one order smaller in $N$.} \\

We now have
\begin{eqnarray}
  S_{j}(N) & \ = \ & \sum_{n'=jN}^{(j+1)N}\ncr{\frac{1}{s!d!}\sum_{i=0}^{j}\left(-1\right)^{i}\ncr hi\frac{\left(n'-iN\right)^{h-1}}{(h-1)!}}k+O\left(\sum_{n'=jN}^{(j+1)N}\right)\left(n-iN\right)^{h-2}\nonumber \\
 &  \sim & \sum_{n'=jN}^{(j+1)N}\frac{\left(\frac{1}{s!d!}\right)^{k}\left(\sum_{i=0}^{j}\left(-1\right)^{i}\ncr hi\left(\frac{\left(n'-iN\right)^{h-1}}{(h-1)!}\right)^{k}\right)}{k!}
\end{eqnarray}


We pull out all terms that do not depend on $n'$ to get
\begin{equation}
S_{j}(N)\ \sim\ \frac{1}{(s!d!)^{k}k!(h-1)!^{k}}\sum_{n'=jN}^{(j+1)N}\left(\sum_{i=0}^{j}(-1)^{i}\ncr hi(n'-iN)^{h-1}\right)^{k}.\label{eq:S_jreduced}
\end{equation}
Our goal is to find the dependence on $s, d$ and $N$. To do this, we first approximate \eqref{eq:S_jreduced} with an integral
to get
\begin{equation}
S_{j}(N)\ \sim\ \frac{1}{(s!d!)^{k}k!(h-1)!^{k}}\int_{jN}^{(j+1)N}\left(\sum_{i=0}^{j}(-1)^{i}\ncr hi(x-iN)^{h-1}\right)^{k}dx;
\end{equation} the cost of the approximation is one order lower in $N$ as we have sums of polynomials.

We change variables by taking $x=(j+t)N$ with $t$ ranging from 0 to 1. Thus $dx=Ndt$ and as $j > 0$ (if $j=0$ we cannot pull out the power of $j$)
%
\begin{eqnarray}
S_{j}(N) & \ \sim\  & \frac{1}{(s!d!)^{k}k!(h-1)!^{k}}j^{(h-1)k}N^{k(h-1)+1}\int_{0}^{1}\left(\sum_{i=0}^{j}(-1)^{i}\ncr hi\left(1-\frac{(i-t)}{j}\right)^{h-1}\right)^{k}dt.\nonumber\\ 
\end{eqnarray} From our definition of $b_{h,k}$ (see \eqref{eq:eqnB}) and the fact that by symmetry it suffices to sum $n'$ up to $hN/2$, summing over $j$ completes the proof. \end{proof}

%
%

\begin{lem}\label{thm:EX_k,} We have
\begin{equation}
\E(X_{k})\ \sim\ \xi_{0,k}(N)p(N)^{-hk\delta} \ = \ \frac{b_{h,k} c^{hk}}{(s!d!)^k} N^{(h-1)k+1-hk\delta},
\end{equation} with $b_{h,k}$ defined in \eqref{eq:eqnB}.
\end{lem}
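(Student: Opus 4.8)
The plan is to isolate the $i=0$ term of the expansion \eqref{eq:1} for $\E(X_k)$, match it to the claimed asymptotic using Lemma \ref{prop:e_0}, and then show that every term with $i\ge1$ is of strictly smaller order in $N$. In \eqref{eq:1} the leading term is $\xi_{0,k}(N)\,p(N)^{hk}$, and since $p(N)=cN^{-\delta}$ we have $p(N)^{hk}=c^{hk}N^{-hk\delta}$. Substituting the value of $\xi_{0,k}(N)$ from Lemma \ref{prop:e_0} gives
\[
\xi_{0,k}(N)\,p(N)^{hk}\ \sim\ \frac{b_{h,k}}{(s!d!)^k}N^{(h-1)k+1}\cdot c^{hk}N^{-hk\delta}\ =\ \frac{b_{h,k}c^{hk}}{(s!d!)^k}N^{(h-1)k+1-hk\delta},
\]
which is exactly the right-hand side of the lemma. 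So the entire content is that this single term dominates the (finite) sum.

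To prove dominance I would bound $\xi_{i,k}(N)$ for $i\ge1$ by a degrees-of-freedom count. A $k$-tuple counted by $\xi_{i,k}(N)$ has $hk$ integer entries arranged in $k$ blocks all generating a common value; there are at most $hk-i$ distinct entry-values to assign, and requiring the $k$ blocks to generate the same number imposes $k-1$ (generically independent) constraints, leaving at most $(hk-i)-(k-1)=(h-1)k+1-i$ free choices from $\{0,\dots,N\}$. Hence $\xi_{i,k}(N)=O\!\left(N^{(h-1)k+1-i}\right)$, uniformly over the finitely many repetition patterns. Using the Type-$i$ probability $p(N)^{hk-i}=c^{hk-i}N^{-\delta(hk-i)}$ from the reasoning around \eqref{eq:typet}, the $i$-th term of \eqref{eq:1} satisfies
\[
\xi_{i,k}(N)\,p(N)^{hk-i}\ =\ O\!\left(N^{(h-1)k+1-hk\delta}\cdot N^{-i(1-\delta)}\right).
\]
Because $\delta\in(0,1)$ we have $1-\delta>0$, so for each fixed $i\ge1$ this is smaller than the $i=0$ term by the factor $N^{-i(1-\delta)}\to0$; summing over the finitely many indices $i=1,\dots,hk-1$ shows their total contribution is $o\!\left(\xi_{0,k}(N)p(N)^{hk}\right)$. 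Combining the two displays yields $\E(X_k)\sim\xi_{0,k}(N)p(N)^{hk}=\frac{b_{h,k}c^{hk}}{(s!d!)^k}N^{(h-1)k+1-hk\delta}$, as claimed.

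The main obstacle is making the bound $\xi_{i,k}(N)=O(N^{(h-1)k+1-i})$ fully rigorous, i.e. checking that imposing $i$ coincidences among the $hk$ entries really removes $i$ degrees of freedom and does not collapse the block-generating constraints (for example, when an entire block is duplicated its constraint becomes redundant, which could a priori inflate the count). Since $h$, $k$ and $i$ are fixed while only $N\to\infty$, there are only boundedly many repetition patterns, and for each the worst case still leaves at most $(h-1)k+1-i$ free entries; organizing this exactly as in the Type-$i$ bookkeeping set up before \eqref{eq:typet} makes the estimate routine, and it is strong enough for every $\delta<1$, in particular throughout the regime $\delta\ge\frac{h-1}{h}$ in which the lemma is used.
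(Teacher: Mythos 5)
Your proposal is correct and takes essentially the same route as the paper: both isolate the Type-$0$ term $\xi_{0,k}(N)p(N)^{hk}$ (note the lemma's displayed exponent $p(N)^{-hk\delta}$ is a typo for $p(N)^{hk}=c^{hk}N^{-hk\delta}$, which you read correctly), evaluate it via Lemma \ref{prop:e_0}, and kill each Type-$\ell$ term of \eqref{eq:1} by the identical degrees-of-freedom count --- your $(hk-i)-(k-1)=(h-1)k+1-i$ free choices is exactly the paper's $hk-k-\ell$ solutions for fixed $n$ plus one power of $N$ from summing over $n$ --- yielding the same decisive factor $N^{-\ell(1-\delta)}$ with $\delta<1$. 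The degeneracy worry you flag (a duplicated block making a constraint redundant) does not arise because $A_k$ is a \emph{set} of tuples, so identical blocks collapse into a single element, and in any case the paper's own proof asserts the same count at the same level of rigor without addressing it.
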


\begin{proof} By Lemma \ref{prop:e_0}, it suffices to show $\E(X_{k}) \sim \xi_{0,k}(N)p(N)^{-hk\delta}$.
To show that we can estimate $\E(X_{k})$ by $\xi_{0,k}(N)p(N)^{hk}$,
it suffices to prove that for each $\ell > 0$ we have
\begin{equation}
\xi_{\ell,k}(N)p(N)^{hk-\ell} \ \ = \  \
o(\xi_{0,k}(N)p(N)^{hk}).
\end{equation}

From \eqref{eq:typet}, for $\ell>0$ the probability of being Type $\ell$ is $p(N)^{hk-\ell}$ and the number of such $k$-tuples
is $\xi_{\ell,k}(N)$. The repeated elements can either be in the same $h_{(s,d)}$-tuple or in different $h_{(s,d)}$-tuples. In both cases we have the same order, though. We have $k$ sets of $h$-tuples. That would give us $hk$ independent variables; however, each of the $h$-tuples must sum to $N$ (so we lose $k$ degrees of freedom), and then we lose another $\ell$ by assumption (if $\ell=0$ we have no repeated elements, which is the main term). Thus for a fixed $n$ the number of solutions is at most on the order of $N^{hk-k-\ell}$; summing over $n$ gives at most order $N$, for a total contribution of at most order $N^{(h-1)k-\ell+1}$.

We now multiply by the probability $p(N)^{hk-\ell}$ and get
\begin{equation}
\xi_{\ell,k}(N)p(N)^{hk-\ell}\ = \ O\left(N^{(h-1)k-\ell+1-(hk-\ell)\delta}\right).
\end{equation}
Because $\delta<1$ and $\ell > 0$, we know that
\begin{equation}
\xi_{\ell,k}(N)p(N)^{hk-\ell} \ = \ O\left(N^{(h-1)k+1- hk\delta - \ell(1-\delta)}\right)\ = \ O\left(\xi_{0,k}(N)N^{-\delta hk} \cdot N^{-\ell(1-\delta)}\right),
\end{equation}
so the probability of choosing $k$-tuples with $\ell$ repeats is of a lower order than the probability of choosing a $k$-tuple
with no repeats, completing the proof.
\end{proof}

%

\subsection{Strong Concentration Results\label{sec:Strong-Concentration}}

We need to show $X_{k}$ is strongly concentrated about its expected
value as $N\rightarrow\infty$ to conclude that
the actual number of distinct elements in the generalized sumset approaches
the expectation. We know from Lemma \ref{thm:EX_k,} that the $expected$
number of distinct elements is of a higher order than the $expected$
number of repeated elements, but if we do not know that the $actual$
number of distinct elements is close to its expectation, then Lemma
\ref{thm:EX_k,} is of little use. Here we show that the actual number
does indeed approach its mean. This is similar to equations (2.9) and (2.10)
of \cite{HM}.

\begin{lem}
\label{lem:strong_concentration}For \textup{$\delta\ge\frac{h-1}{h}$}, $X_{k}$ becomes strongly concentrated about its expected value
as $N\rightarrow\infty$.\end{lem}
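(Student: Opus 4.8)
The plan is to establish strong concentration via the second-moment method, showing that the variance of $X_k$ is of lower order than the square of its expectation, so that Chebyshev's inequality forces $X_k/\E(X_k) \to 1$ in probability. Concretely, I would aim to prove that $\mathrm{Var}(X_k) = o(\E(X_k)^2)$, which by Chebyshev gives, for any $\epsilon > 0$,
\begin{equation}
\pp\left(|X_k - \E(X_k)| \ge \epsilon\, \E(X_k)\right) \ \le\ \frac{\mathrm{Var}(X_k)}{\epsilon^2 \E(X_k)^2} \ \longrightarrow\ 0,
\end{equation}
which is exactly the notion of strong concentration ($X_k \sim \E(X_k)$) used throughout. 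By Lemma \ref{thm:EX_k,} we already know $\E(X_k) \sim \frac{b_{h,k}c^{hk}}{(s!d!)^k} N^{(h-1)k+1-hk\delta}$, so the task reduces to controlling the variance.

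First I would write $X_k = \sum_\alpha \mathbf{1}_\alpha$, where $\alpha$ ranges over all potential $k$-tuples of ordered $h_{(s,d)}$-tuples whose $k$ constituent tuples share a common generated value, and $\mathbf{1}_\alpha$ is the indicator that every element appearing in $\alpha$ lies in $A$. Then
\begin{equation}
\mathrm{Var}(X_k)\ =\ \sum_{\alpha,\beta}\Big(\pp(\mathbf{1}_\alpha = \mathbf{1}_\beta = 1) - \pp(\mathbf{1}_\alpha = 1)\pp(\mathbf{1}_\beta = 1)\Big).
\end{equation}
The key observation is that when $\alpha$ and $\beta$ involve disjoint sets of underlying integers, independence of the binomial choices makes the corresponding summand vanish. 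Hence only pairs $(\alpha,\beta)$ that \emph{share} at least one element of $I_N$ contribute. I would partition these overlapping pairs according to the number $r \ge 1$ of shared distinct integers, and bound each class combinatorially: sharing $r$ elements removes roughly $r$ degrees of freedom from the joint count while the joint probability picks up a factor $p(N)^{-r}$ relative to the fully independent case (shared elements are counted once, not twice). The heuristic that drives everything is that the diagonal-type (overlapping) contributions scale like $\E(X_k)^2 \cdot N^{-(1-\delta)\cdot r}$ times the correct combinatorial degrees-of-freedom loss, and since each overlap both costs a power of $N$ in free choices and saves a power of $p(N) = cN^{-\delta}$ in probability, the net exponent is negative precisely because $\delta < 1$.

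The main obstacle, and where the care must go, is the bookkeeping in the overlap estimate: I must show that for every $r \ge 1$ the total contribution of pairs sharing $r$ integers is $o(\E(X_k)^2)$, uniformly enough to sum over $r$. This parallels the degrees-of-freedom counting already carried out in the proof of Lemma \ref{thm:EX_k,}, where each repeated element cost a factor $N^{-\ell(1-\delta)}$; here the analogous mechanism is that identifying an element between $\alpha$ and $\beta$ reduces the number of joint free variables by one (costing $N^{-1}$ in count) while raising the probability by $p(N)^{-1} = c^{-1}N^{\delta}$, for a net $N^{-(1-\delta)}$ per identified element. Because $\delta \ge \frac{h-1}{h}$ lies strictly below $1$ for finite $h$, each factor of $N^{-(1-\delta)}$ is genuinely decaying, so the off-diagonal sum is dominated by the $r=1$ term and is of lower order than $\E(X_k)^2$. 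I would handle the boundary exponent at $\delta = \frac{h-1}{h}$ with the same estimate, since the $1-\delta > 0$ gain is what matters rather than the precise value; the argument follows the structure of equations (2.9)–(2.10) of \cite{HM} generalized from $h=2$ to arbitrary $h$. The delicate point is ensuring the combinatorial constants absorbed into the $O(\cdot)$ bounds do not depend on $N$, so that after dividing by $\E(X_k)^2 = \Theta(N^{2(h-1)k+2-2hk\delta})$ the ratio tends to zero.
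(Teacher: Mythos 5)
Your proposal is correct and follows essentially the same route as the paper: a second-moment/Chebyshev argument in which only pairs of $k$-tuples sharing elements contribute, and each shared element costs a factor $N^{-1}$ in the count while gaining $p(N)^{-1} = c^{-1}N^{\delta}$ in probability, for a net $N^{-(1-\delta)}$ that decays since $\delta < 1$. The paper phrases this via the quantity $\triangle = \sum_{\alpha\sim\beta} P(Y_\alpha \cap Y_\beta)$ (citing equation (2.9) of \cite{HM}, with the dominant contribution from pairs sharing exactly one element, giving $\triangle = O(N^{2k(h-1)+1-(2kh-1)\delta}) = o(\E(X_k)^2)$), which is exactly your covariance decomposition with the $r=1$ overlap term dominating.
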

\begin{proof}
We employ a second moment method to show that $N^{-\big(\frac{(h-1)k+1}{kh}\big)}=o(p(N))$
implies $X_{k}$ is highly concentrated about its mean.

Let $\triangle=\sum_{a\sim b}P(Y_{\alpha}\cap Y_{\beta})$ where
$\alpha\sim\beta$ if $k$-tuples $\alpha,\beta$ have at least one number in
common, and $Y_{\alpha}$ is an indicator variable for each unordered
$k$-tuple having a constant sum. As $N\rightarrow\infty$, from Lemma
\ref{thm:EX_k,} we know that $\E(X_{k})\rightarrow\infty$ so, as
in equation (2.9) of \cite{HM}, it suffices to show that
\begin{equation}
\triangle\ = \ o(\E(X_{k})^{2})\ = \ o_{k}\left(\left(N^{2(h-1)k+2}\right)\left(c^{2hk}N^{-2hk\delta}\right)\right).
\end{equation}

The main contribution is from pairs
with $hk$ distinct elements and exactly $1$ element in common. From
Proposition \ref{prop:e_0}, we have $O(N^{(h-1)k+1})$ choices for
$\alpha$. There are $hk$ choices for common element with $\beta$,
$O(N^{(h-1)k})$ choices for the rest of $\beta$, and $2kh-1$ elements
in $\alpha\cup\beta$, so
\begin{equation}
P(Y_{\alpha}\cap Y_{\beta})\ = \ O(p(N)^{2kh-1}).
\end{equation}

Generalizing equation (2.10) in \cite{HM},
\begin{eqnarray}
 \triangle & \ = \ & \sum_{a\sim b}P(Y_{\alpha}\cap Y_{\beta})\nonumber \\
& \ = \ &\sum_{a\sim b}p(N)^{2kh-1}\nonumber \\
& \ = \ &O(N^{(h-1)k+1+(h-1)k})c^{2kh-1}N^{-(2kh-1)\delta})\nonumber \\
& \ = \ & O(N^{2k(h-1)+1-(2kh-1)\delta}).
\end{eqnarray}

Because $\delta<1$,
\begin{equation}
\triangle\ = \ o_{k}(N^{2k(h-1)+2-2kh\delta}).\label{eq:delta}
\end{equation}

which proves our lemma.$ $
\end{proof}

\section{Phase Transition}

\subsection{Fast Decay\label{sec:Case-i,-Theorem}}

Here we prove the first claim of Theorem \ref{thm:mainfastcriticaldecay}. We can do this using
Chebyshev's inequality and Lemmas \ref{thm:EX_k,} and \ref{lem:strong_concentration}.
This is equations 2.11-2.12 of \cite{HM}.
For $\delta>\frac{h-1}{h}$,
\begin{align}
 & X_{1}\ \sim\ E(X_{1})\ \sim\ \left(b_{h,1}N^{h}\right)\left(c^{h}N^{-h\delta}\right)\nonumber \\
 & X_{2}\ \sim\ \left(b_{h,2}N^{2(h-1)+1}\right)\left(c^{2h}N^{-2h\delta}\right).
\end{align}

We get the above equations from plugging $k=1,2$ into Lemma \ref{thm:EX_k,}.
Because $\delta>\frac{h-1}{h}$ and $X_{1}=\Theta(N^{h-h\delta})$,
\begin{equation}
X_{2}\ = \ O(N^{2(h-1)+1-2h\delta})\ = \ O(X_{1})+O(N^{-h+1+h\delta}).
\end{equation}

The error term is lower order, so as $N\rightarrow\infty$, all but
a vanishing proportion of $h_{(s,d)}$-tuples will generate distinct
sums.

\subsection{Critical Decay\label{sec:Case-ii,-Theorem}}

We are now ready to prove  the second claim in Theorem \ref{thm:mainfastcriticaldecay}. The key result in our earlier approximation of $X_{k}$ is that
when we plug in $\delta\ = \ \frac{h-1}{h}$, all the exponents on $N$
sum to $1$, so we are left with a term on the order of $N$.

We first claim
\begin{equation}
\left||A_{s,d}|-\sum_{k=1}^{m}(-1)^{k-1}X_{k}\right|\ \leq\ X_{m}.\label{eq:onesum}
\end{equation}

We omit the proof because of its similarity to \cite{HM} (see equation (2.16) there).

We now want to show
\begin{equation}
|A_{s,d}|\ \sim\ \sum_{k=1}^{m}(-1)^{k-1}X_{k}.
\end{equation}

To do this, we need to show that the coefficients on $X_{m}$ go to
$0$ as $k\rightarrow\infty$. The proof of this is a rote bound; we omit the details. By our concentration result
in Lemma \ref{lem:strong_concentration},
\begin{equation}
X_{m}\ \sim\ E(X_{m})\ \sim\ b_{h,k}N^{(h-1)k+1-hk\delta}c^{hk}\ \sim\ b_{h,k}c^{hk}N.\label{eq:coeffics0}
\end{equation}

Therefore, because $\delta=\frac{h-1}{h}$,
\begin{equation}
X_{m}\ \sim\ b_{h,k}c^{hk}N.
\end{equation}
Following equation (2.18) of \cite{HM} and
using equation (\ref{eq:onesum}):
\begin{equation}
|A_{s,d}|\ \sim\ \sum_{k=1}^{m}(-1)^{k-1}X_{k}\ \sim\ N\sum_{k=1}^{m}(-1)^{k-1}b_{h,k}c^{hk}.\label{eq:sizegeneralized}
\end{equation}

We conclude that
\begin{equation}
S_{d}^{s}\ \sim\ Ng(c;s,d).
\end{equation}

We define $g(c;s,d)$ to capture the $N$-dependency
of the size of our generalized sumset $A_{s,d,}$. Unlike in the case
of $A+A$ versus $A-A$, this function no longer has a nice closed
form. The function we have defined arises in the generalization of
Hegarty-Miller's random variables, and the purpose of $g(c;s,d)$
is to identify and pull out the N to determine how the size of the
generalized sumset depends on $N$.

We want to compare the sizes of two sets $A_{s_{1},d_{1}}$
and $A_{s_{2},d_{2}}$ for $s_{1}+d_{1}=s_{2}+d_{2}=h.$ The $k,h,N$
factors are all the same and cancel, so $|A_{s_1,d_1}|/|A_{s_2,d_2}|$
depends only on $s_{1}$, $s_{2}$, $d_{1}$, $d_{2}$. Therefore,
\begin{equation}
\frac{|A_{s_1,d_1}|}{|A_{s_2,d_2}|} \ = \ \frac{s_{2}!d_{2}!}{s_{1}!d_{1}!}.
\end{equation}  Because $d\leq s$, the maximum value of $1/(s!d!)$ is achieved
at the minimum value of $s!d!$, which occurs when $s=d$. Thus, we
conclude that as $N\rightarrow\infty$, with a probability of choosing
elements decaying in $N$, the set with the most minus signs is almost
surely larger. This proves the second claim of Theorem \ref{thm:mainfastcriticaldecay}.

\subsection{Future Work: Slow Decay}\label{sec:slowdecay}

We are left with the case when $\delta<\frac{h-1}{h}$. This was done in the third case of Theorem 1.1 in \cite{HM} for two summands, but in the general case of slow decay it is considerably more difficult for a number of reasons. The crucial difference in the analysis of the case of critical decay and the case of slow decay is that the case of slow decay focuses on the number of elements missing from $A_{s,d}$, while the case of critical decay focuses on the number of elements present in $A_{s,d}$. In the previous sections, we approximated $|A_{s,d}|$ by focusing on the middle of the interval $[-dN, sN]$ because it was here that elements were most likely to be present. However, to measure the number of sums missing from $A_{s,d}$, we instead need to look at the fringes of the interval, so the analysis shifts completely.
Following \cite{HM}, we would need to estimate the expectation of the number of elements missing from the generalized sumset.
In \cite{HM}, they let $\mathscr{E}_{n}$ denote the event that $n \not\in A+A$. They can then find the expected number of missing sums,
\begin{equation} \mathbb{E}[\mathscr{S}^{c}] \ = \  \sum_{n=0}^{2N}
\mathbb{P}(\mathscr{E}_{n});  \end{equation}
however, to find ${P}(\mathscr{E}_{n})$, they use that all ways of representing any integer $n$ are independent of one another. This leads to the following nice equation in \cite{HM}:
\begin{equation}  \twocase{
\mathbb{P}(\mathscr{E}_{n}) \ = \ }{(1-p^{2})^{n/2} (1-p)}{if $n$ is
even}{(1-p^{2})^{(n+1)/2}}{if $n$ is odd.} \end{equation}
In the general case, this formula is significantly less tractable as now the various ways to summing to $n$ all depend on one another. The probability must be conditioned on each previous element chosen to be in the $h_{s,d}$-tuple, and that is the major difficulty in finding this formula in the general case.
In the next equation of \cite{HM}, they sum over the probabilities of each $n$ in the interval:
\begin{equation} \mathbb{E}[\mathscr{S}^{c}]\ \sim\ 4 \cdot
\sum_{m=0}^{\lfloor N/2 \rfloor} (1-p^{2})^{m}\ \sim\ {4 \over p^{2}}; \end{equation}
however, in the $h=2$ case, this summation takes advantage of nice geometric series properties which are not have available in the general case, and are thus left for future work.



\ \\

\end{document}